\theoremstyle{theorem}
\newtheorem{thm}{Theorem}
\newtheorem{lem}[thm]{Lemma}
\theoremstyle{remark}
\theoremstyle{definition}
\begin{document}

\title{Classification of $\mathbb{C}P^2$-multiplicative Hirzebruch genera}
\author{Victor M. Buchstaber, Elena Yu. Netay}
\date{}

\address{Steklov Mathematical Institute, Russian Academy of Sciences, Gubkina str. 8, 119991, Moscow, Russia.}
\email{bunkova@mi.ras.ru (E.Yu.Netay), buchstab@mi.ras.ru (V.M.Buchstaber).}

\maketitle

\begin{abstract}
The short article \cite{BN} states results on $\mathbb{C}P(2)$-multiplicative Hirzebruch genera. The aim of the following text is to give a proof of Theorem 3 from \cite{BN}. This proof uses only the technique of functional differential equations.
\end{abstract}

\section{Preliminaries} \text{}

	Let $R$ be a commutative torsion-free ring with unity and no zero divisors, and let $L_f: \Omega_U \to R$ be the Hirzebruch genus determined by the series
$f(x) =  x + \sum_{k = 1}^\infty f_k {x^{k+1} \over (k+1)!}$, where $f_k \in R$.

A Hirzebruch genus $L_f: \Omega_U \to R$ is called \emph{$\mathbb{C}P(2)$-multiplicative}, if we have
$L_f[M] = L_f[\mathbb{C}P(2)] L_f[B]$ for any bundle of stably complex manifolds $M \to B$ with fiber $\mathbb{C}P(2)$ and structure group $G$ such that $U^*(B G)$ is torsion-free. From the localization theorem for the universal toric genus (see \cite{BPR}) for the standard action of the torus $T^3$ on the complex projective plane $\mathbb{C}P(2)$, theorem holds:

\begin{thm} \label{t0}
A genus $L_f$ is $\mathbb{C}P(2)$-multiplicative if and only if $f(x)$ solves the functional equation
\begin{equation} \label{fe}
	{ 1 \over f(t_1 - t_2) f(t_1 - t_3)} + {1 \over f(t_2 - t_1) f(t_2 - t_3)} + {1 \over f(t_3 - t_1) f(t_3 - t_2)} = C, \quad C \in R.
\end{equation}
\end{thm}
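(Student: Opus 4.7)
The plan is to combine the localization theorem of \cite{BPR} for the universal toric genus on $\mathbb{C}P(2)$ with the characterization of $\mathbb{C}P(2)$-multiplicativity as the condition that the equivariant genus of $\mathbb{C}P(2)$ be a constant series in $R[[t_1, t_2, t_3]]$.

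First I would invoke the universal toric genus $\Phi(\mathbb{C}P(2)) \in U^*(BT^3) = \Omega_U[[t_1, t_2, t_3]]$ of the standard $T^3$-action. For any principal $T^3$-bundle $P \to B$ classified by $c\colon B \to BT^3$, the associated bundle $M = P \times_{T^3} \mathbb{C}P(2) \to B$ satisfies
\begin{equation*}
L_f[M] = \pi_*\bigl(c^* L_f(\Phi(\mathbb{C}P(2)))\bigr),
\end{equation*}
where $\pi\colon B \to \mathrm{pt}$ is the pushforward in the $L_f$-theory and $L_f(\Phi(\mathbb{C}P(2))) \in R[[t_1, t_2, t_3]]$ is the image of the universal toric genus under $L_f$. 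Whenever this power series is a constant $C \in R$, the formula reduces to $L_f[M] = C \cdot L_f[B]$; conversely, I will argue that requiring $L_f[M] = L_f[\mathbb{C}P(2)] \cdot L_f[B]$ for every admissible bundle forces the series to be constant.

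Next I would apply the localization theorem to $\Phi(\mathbb{C}P(2))$. The three $T^3$-fixed points $[1:0:0]$, $[0:1:0]$, $[0:0:1]$ have tangent weights $\{t_i - t_j : j \neq i\}$ at the $i$-th point (with an appropriate sign convention for the action). Under the Hirzebruch genus $L_f$, the formal Euler class of a weight-$w$ line bundle becomes the series $f(w)$, so localization yields
\begin{equation*}
L_f(\Phi(\mathbb{C}P(2))) = \sum_{i=1}^3 \prod_{j \neq i} \frac{1}{f(t_i - t_j)},
\end{equation*}
which is precisely the left-hand side of \eqref{fe}. Thus $\mathbb{C}P(2)$-multiplicativity of $L_f$ is equivalent to the identity \eqref{fe} with some constant $C \in R$, which must then be $C = L_f[\mathbb{C}P(2)]$ by specializing at $t_1 = t_2 = t_3 = 0$ (equivalently, by evaluating on the trivial bundle over a point).

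The main obstacle, I expect, is the implication \emph{multiplicativity $\Rightarrow$ the element of $R[[t_1, t_2, t_3]]$ is literally a constant power series}. For this I would take $G = T^3$ itself, so that $U^*(BG) = \Omega_U[[t_1, t_2, t_3]]$ is manifestly torsion-free, and work with finite-dimensional approximations of $BT^3$ by products of $\mathbb{C}P(N)$'s. Requiring $L_f[M] - L_f[\mathbb{C}P(2)] L_f[B] = 0$ for all approximating bases detects, coefficient by coefficient, the non-constant part of $L_f(\Phi(\mathbb{C}P(2))) - L_f[\mathbb{C}P(2)]$; torsion-freeness of $U^*(BG)$ rules out any spurious cancellations among the Chern-class monomials, so that vanishing of all these bundle invariants forces the series to be genuinely constant. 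The reverse direction is immediate: if \eqref{fe} holds then the bundle formula of the first paragraph collapses to $L_f[M] = C \cdot L_f[B]$ for every admissible bundle.
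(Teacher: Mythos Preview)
The paper does not actually supply a proof of this theorem; it simply asserts, in the sentence preceding the statement, that the result follows from the localization theorem for the universal toric genus in \cite{BPR} applied to the standard $T^3$-action on $\mathbb{C}P(2)$. Your proposal is precisely a fleshed-out version of that one-line indication and follows the same route: localization at the three fixed points produces the sum $\sum_{i}\prod_{j\ne i} 1/f(t_i-t_j)$ as the image of the universal toric genus under $L_f$, and $\mathbb{C}P(2)$-multiplicativity is then equivalent to this series being the constant $L_f[\mathbb{C}P(2)]$. So your approach agrees with the paper's (implicit) argument, only supplying the details the paper omits; one small caution is that the phrase ``specializing at $t_1=t_2=t_3=0$'' is formal since each summand is singular there, but your parenthetical ``evaluating on the trivial bundle over a point'' is the correct interpretation (i.e.\ taking the constant term of the regular power series).
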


In~\cite{H} it was shown with the help of equation \eqref{fe} that for bundles of \emph{oriented} manifolds the universal $\mathbb{C}P(2)$--multiplicative genus is determined by the signature of the manifold.
We~have~$C = L_f[\mathbb{C}P(2)] = {3 f_1^2- f_2 \over 2}$.

\section{Theorem} \text{}

In \cite{BN} the following theorem is proposed. Its proof is given in the next section.

\begin{thm} \label{t1} Let $L_f$ be a $\mathbb{C}P(2)$-multiplicative genus.

	If $L_f[\mathbb{C}P(2)] \ne 0$, then $L_f$ is the two-parametric Todd genus, and
\begin{equation} \label{ft}
	f(x) = {e^{\alpha x} - e^{\beta x} \over \alpha e^{\alpha x} - \beta e^{\beta x}} , \quad  f_1 = - (\alpha + \beta), \quad f_2 = 2 \alpha \beta + f_1^2, \quad f_3 = 4 f_1 f_2 - 3 f_1^3.
\end{equation}

	If $L_f[\mathbb{C}P(2)] = 0$, then it is a two-parametric case of general elliptic genus in the terminology of \cite{Buc10},~and
\begin{equation} \label{fp}
	f(x) = - { 2 \wp(x) + {a^2 \over 2} \over \wp'(x) - a \wp(x) + b - {a^3 \over 4}}.
\end{equation}
Here $\wp$ and $\wp'$ are Weierstrass functions of the elliptic curve with parameters
$g_2 = - {1 \over 4} (8 b - 3 a^3) a$, $g_3 = {1 \over 24} (8 b^2 - 12 a^3 b + 3 a^6)$, and discriminant
$\Delta = - b^3 (3 b - a^3)$.
The parameters $a$ and $b$ are related to the coefficients of the series $f(x)$ by
$f_1 = - a$, $f_2 = 3 a^2$, $f_3 = 12 b - 9 a^3$.
\end{thm}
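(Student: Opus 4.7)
The plan is to convert the three-variable functional equation \eqref{fe} into a functional-differential equation and then into a single-variable ODE for $f(x)$, classify its solutions by case, and match the low-order Taylor coefficients. First I would set $t_3 = 0$ in \eqref{fe} and write $u = t_1$, $v = t_2$; clearing denominators turns \eqref{fe} into a polynomial identity $F(u,v) = 0$ involving the values of $f$ at the five points $u$, $v$, $-u$, $-v$, and $u-v$.

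Next I would expand $F(u,v) = 0$ in powers of $v$ at $v=0$ (equivalently, differentiate in $v$ and specialize), using $f(0)=0$, $f'(0)=1$, $f''(0)=f_1$, $f'''(0)=f_2$, $f^{(4)}(0)=f_3$. The zeroth-order relation recovers $C = (3f_1^2 - f_2)/2$, already stated in the excerpt. A few orders further one extracts the key single-variable ODE of the shape
\[
\bigl[f'(u)\bigr]^2 = P_2\bigl(f(u)\bigr) + P_1\bigl(f(u)\bigr)\, f'(u),
\]
where $P_1, P_2$ are polynomials of low degree whose coefficients lie in $\mathbb{Z}[C, f_1, f_2, f_3]$. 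Geometrically this says that $(f, f')$ parametrizes a (possibly degenerate) elliptic curve.

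The dichotomy in the theorem then emerges from case analysis on $C$. When $C \neq 0$, matching one further Taylor coefficient in $v$ forces the additional constraint $f_3 = 4 f_1 f_2 - 3 f_1^3$; with this constraint the ODE factors and integrates in elementary functions, and taking $\alpha, \beta$ to satisfy $\alpha+\beta = -f_1$ and $2\alpha\beta = f_2 - f_1^2$ produces the two-parametric Todd series \eqref{ft} as the unique normalized solution. When $C = 0$, no such constraint on $f_3$ is forced; a fractional-linear substitution $y = (Af+B)/(Df+E)$, with coefficients determined by $f_1$ and $f_3$, turns the ODE into the standard Weierstrass equation $(y')^2 = 4 y^3 - g_2 y - g_3$, whose normalized solution is \eqref{fp} with the stated expressions for $g_2, g_3$ and the dictionary $f_1 = -a$, $f_2 = 3 a^2$, $f_3 = 12 b - 9 a^3$.

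The main obstacle is the explicit derivation of the single-variable ODE from $F(u,v) = 0$: one must expand to just enough orders in $v$, control spurious factors coming from the zero of $f$ at the origin, and show that precisely the announced constraints on $(f_1, f_2, f_3)$ arise, with no further hidden relations. A final verification consists in substituting each candidate \eqref{ft} and \eqref{fp} back into the original three-variable equation \eqref{fe}, confirming that the two families together exhaust the space of solutions.
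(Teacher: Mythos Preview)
Your overall architecture---specialize one variable, expand in the other, extract differential constraints, case-split, then verify---is the same as the paper's. But two of your intermediate claims do not hold as stated, and they are exactly where the work lies.

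First, the ``single-variable ODE of the shape $(f')^2 = P_2(f) + P_1(f)\,f'$'' does not fall out of the expansion. After setting $t_3=0$ and expanding in $v$, the coefficients at each order involve \emph{both} $f(u)$ and $f(-u)$ (and derivatives at $u$), because the original identity is not even in its arguments. You have not said how to eliminate $f(-u)$, and in fact the paper does not attempt to. Working instead with $q=1/f$, the paper obtains at orders $y^0$ and $y^2$ the mixed relations
\[
q(x)^2 - f_1\,q(-x) + q'(-x) = C,\qquad
6\,q(x)q''(x) - K\,q(-x) + (3f_1^2-2f_2)\,q'(-x) - 3f_1\,q''(-x) + 2\,q'''(-x) = 0,
\]
with $K = 3f_1^3 - 4f_1 f_2 + f_3$, and never reduces these to a clean autonomous ODE.

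Second, the constraint $f_3 = 4f_1 f_2 - 3f_1^3$ in the case $C\neq 0$ is not forced by ``one further Taylor coefficient in $v$''. In the paper the two displayed relations are themselves expanded in $x$; orders $x^2,\dots,x^5$ of the first and $x^0,\dots,x^3$ of the second give \emph{coinciding} formulas for $f_4,\dots,f_7$, and only at the level of $f_8$ do the two equations give \emph{distinct} expressions. Subtracting them yields the key consistency condition $C\,K^2 = 0$, which is the source of the dichotomy. Your sketch underestimates how deep one must go.

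Finally, the paper does not integrate any ODE. Existence is handled by direct substitution of the two candidate families into the functional equation (an algebraic identity for the Todd series; a double-periodicity and pole-counting argument for the elliptic series). Uniqueness follows because the first relation above already determines every $f_k$, $k\ge 4$, recursively from $(f_1,f_2,f_3)$, so once the families realize all admissible triples there is nothing left. Your plan to reach Weierstrass form via a fractional-linear change of variable is plausible but would require the ODE you have not yet produced.
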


The genus determined by $f(x)$ as in \eqref{fp} was first introduced in \cite{BucNet11}.

\section{Proof} \text{}

The proof of the theorem follows as a compilation of theorem \ref{t0} with the following three lemmas, each given with its own proof.

For convenience set $q(x) = {1 \over f(x)}$.
Denote $x = t_1 - t_2$, $y = t_2 - t_3$. Equation \eqref{fe} takes the form
\begin{equation} \label{q2}
	q(x)  q(x+y) +  q(- x)  q(y) + q(- x - y)  q(- y) = C.
\end{equation}

\begin{lem} \label{lem1}
	The function $q(x) = {1 \over f(x)}$, where
	\[
		f(x) = {e^{\alpha x} - e^{\beta x} \over \alpha e^{\alpha x} - \beta e^{\beta x}} 
	\]
satisfies the functional equation \eqref{q2} for $C = \alpha^2+ \alpha \beta + \beta^2$.
\end{lem}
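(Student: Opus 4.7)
The plan is to verify \eqref{q2} by direct substitution after a convenient reparametrization. First I would set $\gamma := \alpha - \beta$ and rewrite
\[
q(x) = \frac{\alpha e^{\alpha x} - \beta e^{\beta x}}{e^{\alpha x} - e^{\beta x}} = \frac{\alpha e^{\gamma x} - \beta}{e^{\gamma x} - 1} = \alpha + \frac{\gamma}{e^{\gamma x} - 1},
\]
from which a short direct calculation gives the symmetry $q(x) + q(-x) = \alpha + \beta$.

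Next I would set $u = x$, $v = y$, $w = -x - y$, so $u + v + w = 0$, and observe that the left-hand side of \eqref{q2} equals $q(u)q(-w) + q(-u)q(v) + q(w)q(-v)$. Applying the symmetry above to eliminate every $q$ at a negative argument collapses this into the fully symmetric expression
\[
(\alpha+\beta)\bigl[q(u)+q(v)+q(w)\bigr] - \bigl[q(u)q(v) + q(v)q(w) + q(w)q(u)\bigr].
\]

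Writing $q(z) = \alpha + \gamma P(z)$ with $P(z) = 1/(e^{\gamma z} - 1)$, substituting, and collecting coefficients of $\gamma$ and $\gamma^2$, I expect the target value $\alpha^2 + \alpha\beta + \beta^2$ to reduce (using $(\alpha-\beta)^2 = \gamma^2$) to the single symmetric identity
\[
P(u) + P(v) + P(w) + P(u)P(v) + P(v)P(w) + P(w)P(u) = -1 \qquad \text{whenever } u+v+w=0.
\]

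Finally, setting $a = e^{\gamma u}$, $b = e^{\gamma v}$, $c = e^{\gamma w}$, the constraint becomes $abc = 1$. Multiplying both sides of this identity by $(a-1)(b-1)(c-1)$, the left-hand side simplifies to $ab+bc+ca - (a+b+c)$, while the right-hand side $-(a-1)(b-1)(c-1)$ reduces to the same expression upon using $abc = 1$, and the proof is complete. The only real obstacle is the careful bookkeeping in the symmetrization step; once the problem is recast in terms of $a,b,c$ with $abc = 1$, the remaining identity is elementary and the constant $-1$ emerges precisely from the cancellation $-abc + 1 = 0$.
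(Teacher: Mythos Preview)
Your argument is correct. Every step checks out: the rewriting $q(x)=\alpha+\gamma/(e^{\gamma x}-1)$, the reflection identity $q(x)+q(-x)=\alpha+\beta$, the symmetrization to $(\alpha+\beta)\sum q - \sum q q$, the reduction to $S_1+S_2=-1$ for $P$, and the final polynomial identity in $a,b,c$ with $abc=1$.

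The route, however, is genuinely different from the paper's. The paper proceeds by pure brute force: it substitutes $q$ in terms of exponentials directly into \eqref{q2}, multiplies numerator and denominator of the second and third summands by suitable exponentials to flip the signs inside, brings everything over the common denominator $(e^{\alpha x}-e^{\beta x})(e^{\alpha y}-e^{\beta y})(e^{\alpha(x+y)}-e^{\beta(x+y)})$, and then compares coefficients of the resulting exponential monomials (e.g.\ of $e^{2\alpha(x+y)}$) on both sides, each of which produces $\alpha^2+\alpha\beta+\beta^2$. Your approach instead exploits structure: the single observation $q(x)+q(-x)=\alpha+\beta$ turns the asymmetric-looking left side of \eqref{q2} into a cyclically symmetric expression in $q(u),q(v),q(w)$, after which the affine change $q=\alpha+\gamma P$ collapses the whole question to an elementary identity among $1/(a-1),1/(b-1),1/(c-1)$ under the constraint $abc=1$. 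The paper's method needs no idea beyond patience and is easy to implement symbolically; yours requires spotting the reflection symmetry but rewards it with a clean, short computation and makes the appearance of $C=\alpha^2+\alpha\beta+\beta^2=3\alpha\beta+\gamma^2$ transparent.
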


\begin{proof}
	The proof is a straightforward substitution, namely, equation \eqref{q2} takes the form
\[
	{(\alpha e^{\alpha x} - \beta e^{\beta x}) (\alpha e^{\alpha (x+y)} - \beta e^{\beta (x+y)}) \over (e^{\alpha x} - e^{\beta x}) (e^{\alpha (x+y)} - e^{\beta (x+y)}) } + {(\alpha e^{-\alpha x} - \beta e^{-\beta x}) (\alpha e^{\alpha y} - \beta e^{\beta y}) \over ( e^{-\alpha x} - e^{-\beta x}) (e^{\alpha y} - e^{\beta y}) } + {(\alpha e^{- \alpha (x+y)} - \beta e^{- \beta (x+y)}) (\alpha e^{-\alpha y} - \beta e^{-\beta y}) \over ( e^{- \alpha (x+y)} - e^{- \beta (x+y)}) ( e^{-\alpha y} - e^{-\beta y})} = C,
\]
which after multiplication of the nominator and denominator by a relevant factor becomes
\[
	{(\alpha e^{\alpha x} - \beta e^{\beta x}) (\alpha e^{\alpha (x+y)} - \beta e^{\beta (x+y)}) \over (e^{\alpha x} - e^{\beta x}) (e^{\alpha (x+y)} - e^{\beta (x+y)}) } + {(\beta e^{\alpha x} - \alpha e^{\beta x} ) (\alpha e^{\alpha y} - \beta e^{\beta y}) \over (e^{\alpha x} - e^{\beta x} ) (e^{\alpha y} - e^{\beta y}) } + {(\beta e^{\alpha (x+y)} - \alpha e^{\beta (x+y)}) (\beta e^{\alpha y} - \alpha e^{\beta y}) \over (e^{\alpha (x+y)} - e^{\beta (x+y)} ) (e^{\alpha y} - e^{\beta y})} = C.
\]
Bringing to a common factor one gets
\begin{multline*}
	(\alpha e^{\alpha x} - \beta e^{\beta x}) (e^{\alpha y} - e^{\beta y}) (\alpha e^{\alpha (x+y)} - \beta e^{\beta (x+y)}) 
	+ (\beta e^{\alpha x} - \alpha e^{\beta x}) (\alpha e^{\alpha y} - \beta e^{\beta y}) (e^{\alpha (x+y)} - e^{\beta (x+y)})  + \\
	+ (e^{\alpha x} - e^{\beta x}) (\beta e^{\alpha y} - \alpha e^{\beta y}) (\beta e^{\alpha (x+y)} - \alpha e^{\beta (x+y)})  = C (e^{\alpha x} - e^{\beta x}) (e^{\alpha y} - e^{\beta y}) (e^{\alpha (x+y)} - e^{\beta (x+y)}).
\end{multline*}
Now this expression is available for term-by-term check, like at $e^{2 \alpha (x+y)}$ we have
\[
	\alpha^2  + \alpha \beta + \beta^2  = C 
\]
and the same for all other coefficients.
\end{proof}

\begin{lem} \label{lem2}
	The function $q(x) = {1 \over f(x)}$, where
\[
	f(x) = - { 2 \wp(x) + {a^2 \over 2} \over \wp'(x) - a \wp(x) + b - {a^3 \over 4}}
\]
with parameters $g_2 = - {1 \over 4} (8 b - 3 a^3) a$ and $g_3 = {1 \over 24} (8 b^2 - 12 a^3 b + 3 a^6)$ of the Weierstrass $\wp$-function satisfies the functional equation \eqref{q2} for $C = 0$.
\end{lem}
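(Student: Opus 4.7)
The plan is to reduce equation \eqref{q2} with $C = 0$ to a polynomial identity among Weierstrass functions, then establish it either via a Liouville-style argument or by direct algebraic reduction. Setting
\[
N(x) = \wp'(x) - a\wp(x) + b - \tfrac{a^3}{4}, \qquad D(x) = 2\wp(x) + \tfrac{a^2}{2},
\]
we have $q(x) = -N(x)/D(x)$. Since $\wp$ is even and $\wp'$ is odd, $D(-x) = D(x)$ and $N(-x) = -\wp'(x) - a\wp(x) + b - \tfrac{a^3}{4}$. Multiplying \eqref{q2} through by $D(x) D(y) D(x+y)$ turns the functional equation into the equivalent identity
\[
N(x) N(x+y) D(y) + N(-x) N(y) D(x+y) + N(-x-y) N(-y) D(x) = 0.
\]

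The cleanest route is to apply Liouville's theorem to the left-hand side viewed as a function of $x$ with $y$ fixed. It is doubly periodic with respect to the lattice of $\wp$, and its only potential poles lie at $x \equiv 0$ and $x \equiv -y$ modulo the lattice, to orders at most three. Using $\wp(x) = x^{-2} + O(x^2)$ and $\wp'(x) = -2 x^{-3} + O(x)$, one checks that the $x^{-3}$ contributions of the first two summands at $x = 0$ cancel immediately; continuing to orders $x^{-2}$ and $x^{-1}$, all remaining principal parts cancel precisely when $g_2, g_3$ take the values specified in the lemma. By the symmetry of the identity under $x \leftrightarrow -x - y$, the pole at $x \equiv -y$ is treated identically. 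Liouville's theorem then identifies the left-hand side with a constant, and reading off the constant term of its Taylor expansion at $x = 0$ forces it to equal zero.

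A purely algebraic alternative is to expand $\wp(x+y)$ and $\wp'(x+y)$ via the Weierstrass addition formula
\[
\wp(x+y) = -\wp(x) - \wp(y) + \tfrac{1}{4}\Bigl(\tfrac{\wp'(x) - \wp'(y)}{\wp(x) - \wp(y)}\Bigr)^{\!2},
\]
substitute into the identity, clear $(\wp(x) - \wp(y))^2$, and reduce modulo the cubic relations $(\wp'(u))^2 = 4\wp(u)^3 - g_2 \wp(u) - g_3$ for $u \in \{x, y\}$. The result is polynomial in $\wp(x), \wp(y)$ and linear in each of $\wp'(x), \wp'(y)$, so the verification reduces to checking that four polynomial coefficients (those of $1$, $\wp'(x)$, $\wp'(y)$, $\wp'(x)\wp'(y)$) vanish identically in $\wp(x), \wp(y)$.

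The main obstacle is computational rather than conceptual: the intermediate expressions are large, and the cancellation depends delicately on the precise numerical relations between $a$, $b$ and the prescribed $g_2 = -\tfrac{1}{4}(8b - 3a^3)a$, $g_3 = \tfrac{1}{24}(8b^2 - 12a^3 b + 3a^6)$. The content of the lemma is exactly that these are the unique values of $g_2, g_3$ for which every principal part in the Liouville argument cancels, which is what singles out the elliptic genus of \cite{BucNet11} among general two-parameter deformations.
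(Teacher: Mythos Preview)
Your approach is essentially the same as the paper's: clear denominators, view the resulting expression as an elliptic function of $x$ with $y$ a parameter, kill the principal parts at $x\equiv 0$ using the prescribed $g_2,g_3$, invoke a symmetry for the pole at $x\equiv -y$, and read off the constant. The alternative addition-formula route you sketch is valid but not what the paper does.

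One point needs correction. The substitution $x\mapsto -x-y$ with $y$ fixed is \emph{not} a symmetry of the cleared identity
\[
N(x)N(x+y)D(y)+N(-x)N(y)D(x+y)+N(-x-y)N(-y)D(x)=0;
\]
direct substitution shows the three summands do not permute among themselves. What does hold is $F(-x-y,y;a,b)=F(x,y;-a,-b)$, i.e.\ the substitution must be accompanied by $(a,b)\mapsto(-a,-b)$. Since $g_2,g_3$ are invariant under this sign change, the pole-cancellation calculation at $x=0$ is insensitive to the signs of $a,b$, and your conclusion survives. The paper handles the pole at $x=-y$ by composing the two genuine $S_3$ symmetries $(x,y)\mapsto(y,x)$ with $(a,b)\mapsto(-a,-b)$ and $(x,y)\mapsto(y,-x-y)$; either route closes the argument, but you should state the symmetry correctly.
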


\begin{proof}
We have
\[
	q(x) =  {a \over 2} - { b \over 2 \wp(x) + {a^2 \over 2}} - { \wp'(x) \over 2 \wp(x) + {a^2 \over 2}}.
\]
For $C=0$ equation \eqref{q2} after the substitution of $q(x)$ takes the form (here we take into account that $\wp$ is an even function and $\wp'$ is odd)
\begin{multline*}
	\left( {a \over 2} - { b \over 2 \wp(x) + {a^2 \over 2}} - { \wp'(x) \over 2 \wp(x) + {a^2 \over 2}}\right)
	\left( {a \over 2} - { b \over 2 \wp(x+y) + {a^2 \over 2}} - { \wp'(x+y) \over 2 \wp(x+y) + {a^2 \over 2}}\right) +\\
	+ \left( {a \over 2} - { b \over 2 \wp(x) + {a^2 \over 2}} + { \wp'(x) \over 2 \wp(x) + {a^2 \over 2}}\right)
	\left( {a \over 2} - { b \over 2 \wp(y) + {a^2 \over 2}} - { \wp'(y) \over 2 \wp(y) + {a^2 \over 2}}\right) + \\
	+ \left( {a \over 2} - { b \over 2 \wp(x+y) + {a^2 \over 2}} + { \wp'(x+y) \over 2 \wp(x+y) + {a^2 \over 2}}\right)
	\left( {a \over 2} - { b \over 2 \wp(y) + {a^2 \over 2}} + { \wp'(y) \over 2 \wp(y) + {a^2 \over 2}}\right) = 0.
\end{multline*}
After bringing this expression to a common denominator we obtain that it is required to prove the relation
\begin{multline*}
	\left(\wp(y) + {a^2 \over 4}\right) \left(\wp'(x) + b - a \left(\wp(x) + {a^2 \over 4}\right)\right) \left(\wp'(x+y) + b - a \left(\wp(x+y) + {a^2 \over 4})\right)\right) - \\
	- \left(\wp(x+y) + {a^2 \over 4}\right) \left(\wp'(x) - b + a \left(\wp(x) + {a^2 \over 4}\right)\right) \left(\wp'(y) + b - a \left(\wp(y) + {a^2 \over 4}\right)\right) + \\
	+ \left(\wp(x) + {a^2 \over 4}\right) \left(\wp'(x+y) - b + a  \left(\wp(x+y) + {a^2 \over 4}\right)\right) \left(\wp'(y) - b + a \left(\wp(y) + {a^2 \over 4}\right)\right) = 0.
\end{multline*}
Consider the left part as a function of $x$ where $y$ is a parameter. It is a two-periodic function, it might have poles only in points comparable to $x = 0$ and $x = -y$. Consider this function for $x = 0$. We obtain $0$ at ${1 \over x^3}$, while at ${1 \over x^2}$ we obtain
\[
	32 (3 a^4 - 8 a b - 4 g_2) \wp(y)+ 12 a^6 - 64 a^3 b + 16 a^2 g_2 - 192 g_3 + 64 b^2, 
\]
which gives $0$ after substituting $g_2$ and $g_3$. At ${1 \over x}$ we get
\[
	16 (3 a^4 - 8 a b - 4 g_2) \wp'(y)
\]
which gives $0$ again. Therefore the left part has no poles in points comparable to $x = 0$. As the equation is invariant under substitutions $(x \to y, y \to x, a \to -a, b \to -b)$ and $x \to y, y \to  - x - y, a \to a, b \to b$, thus it has no poles comparable to $x = -y$. Therefore the left part of the expression, being a meromorphic function without poles, must be constant. Calculation of the free term at $x = 0$ shows that this expression is equal to zero.
\end{proof}

\begin{lem}
	The functional equation \eqref{q2} does not have solutions other then stated in Lemmas \ref{lem1} and \ref{lem2}.
\end{lem}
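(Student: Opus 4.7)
My strategy is to treat \eqref{q2} as an identity of formal power series in $x$ and $y$ and to show that, after cancelling singular parts, the resulting identity recursively determines all Taylor coefficients $f_k$ of $f$ from a small amount of initial data that Lemmas \ref{lem1} and \ref{lem2} already exhaust. Writing $q(x) = x^{-1} + Q(x)$, where $Q$ is regular at the origin with $Q(0) = -f_1/2$, $Q'(0) = f_1^2/4 - f_2/6$ and subsequent coefficients explicit polynomials in the $f_k$, and substituting into \eqref{q2}, the leading polar contributions collapse via the elementary identity
\[
\frac{1}{x(x+y)} - \frac{1}{xy} + \frac{1}{y(x+y)} = 0,
\]
so that the equation reduces to a formal power series identity $F(x,y)=C$ whose coefficients are polynomials in the $f_k$.

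Equating the constant term of $F$ reproduces the relation $C = (3f_1^2 - f_2)/2$ already noted after Theorem \ref{t0}. Continuing to the next orders, I expect a relation of the shape $C\cdot(f_3 - 4f_1 f_2 + 3f_1^3) = 0$, producing the dichotomy of Theorem \ref{t1}: either $C \neq 0$, in which case $f_3 = 4f_1 f_2 - 3f_1^3$ is forced and matches Lemma \ref{lem1}, or $C = 0$, in which case $f_2 = 3f_1^2$ is automatic and $f_3$ is a genuinely free parameter, matching Lemma \ref{lem2}.

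The decisive step is an inductive uniqueness statement: for each $k \geq 4$, I would isolate a coefficient of a well-chosen monomial $x^i y^j$ with $i+j = k-1$ in $F - C$ in which $f_k$ appears linearly with a nonvanishing multiplier $\lambda_k$, while the remaining contributions involve only $f_j$ with $j < k$. In the $C \neq 0$ case I expect $\lambda_k$ to be proportional to a power of $C$; in the $C = 0$ case it should be an explicit nonzero polynomial in $f_1$. Proving that this multiplier never vanishes is the principal obstacle and is essentially a careful bookkeeping of the expansion, to be organised using the $(x,y) \leftrightarrow (y,x)$ symmetry of \eqref{q2} and the cyclic symmetry inherited from permutations of $(t_1,t_2,t_3)$. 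Once this is in hand, $f_k$ is uniquely determined by the previous coefficients at each step, so the solution space has dimension at most two in each case.

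To close the identification, I would observe that the parameter map $(\alpha,\beta) \mapsto (f_1,f_2) = (-(\alpha+\beta),\,\alpha^2 + 4\alpha\beta + \beta^2)$ from Lemma \ref{lem1} is surjective onto the locus $C \neq 0$ of admissible initial data, with its fibres given by the two roots of a quadratic, while the map $(a,b) \mapsto (f_1,f_3) = (-a,\,12b - 9a^3)$ from Lemma \ref{lem2} is an affine bijection. Combined with the recursive uniqueness established in the previous paragraph, this forces every solution $q$ of \eqref{q2} to coincide with one of the families of Lemmas \ref{lem1} and \ref{lem2}, which is the desired conclusion.
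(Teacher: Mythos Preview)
Your strategy---expand \eqref{q2} as a power series and extract recursions for the $f_k$---is the same as the paper's, but your execution diverges from it in two places that leave genuine gaps. First, the paper does not work with the full two-variable identity $F(x,y)=C$; it specialises immediately to the coefficients of $y^0$ and $y^2$, obtaining the one-variable relations
\[
q(x)^2 - f_1\,q(-x) + q'(-x) = C
\]
and a companion equation \eqref{u2}. The first of these alone already determines every $f_k$ with $k\ge 4$ from $f_1,f_2,f_3$: the coefficient of $f_{k+2}$ at order $x^k$ is a \emph{universal nonzero rational constant}, not a power of $C$ or a polynomial in $f_1$ as you conjecture. So your ``principal obstacle''---the nonvanishing of $\lambda_k$---disappears once you reduce to one variable, and your case-dependent description of $\lambda_k$ is simply wrong.

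Second, the dichotomy relation does not appear ``at the next orders.'' In the paper it only emerges at the level of $f_8$, by comparing the expression for $f_8$ coming from \eqref{u1} at $x^6$ with the one coming from \eqref{u2} at $x^4$; their difference is (a multiple of) $C K^2$ with $K=f_3-4f_1f_2+3f_1^3$, not $CK$. The dichotomy is thus a \emph{compatibility condition} between two distinct one-variable consequences of \eqref{q2}, not a low-order coefficient of the two-variable expansion. Until you actually carry out this computation (or an equivalent one) your argument does not establish that the parameter space is two- rather than three-dimensional, and the proof is incomplete.
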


\begin{proof}
The series decomposition of \eqref{q2} in $y$ taking into account the initial conditions gives at $y^k$ for $k = 0$ the equation
\begin{equation} \label{u1}
	q(x)^2 - f_1 q(-x) + q'(-x) = C,
\end{equation}
for $k=1$ the derivative of \eqref{u1} and for $k=2$ the equation
\begin{equation} \label{u2}
	6 q(x) q''(x) - K q(-x) + (3 f_1^2 - 2 f_2) q'(-x) - 3 f_1 q''(-x) + 2 q'''(-x) = 0, \qquad K = 3 f_1^3 - 4 f_1 f_2 + f_3.
\end{equation}
Decomposing the equations \eqref{u1} and \eqref{u2} and taking into account initial conditions, we obtain from \eqref{u1} at $x^0$ the relation $2 C = 3 f_1^2 - f_2$. Further from $x^k$ in \eqref{u1} and $x^{k-2}$ in \eqref{u2} for $k = 2, 3, 4, 5$ we get coinciding relations
\begin{align*}
	f_4 &= 15 f_1^4 - 25 f_1^2 f_2 + 7 f_1 f_3 + 4 f_2^2,\\
	f_5 &=15 f_1^3 f_2 - 15 f_1^2 f_3 - 10 f_1 f_2^2 + 6 f_1 f_4 + 5 f_2 f_3,\\
	2 f_6 &= 315 f_1^6 - 945 f_1^4 f_2 + 345 f_1^3 f_3 + 660 f_1^2 f_2^2 - 93 f_1^2 f_4 - 290 f_1 f_2 f_3 - 60 f_2^3 + 18 f_1 f_5 + 32 f_2 f_4 + 20 f_3^2,\\
	f_7 &= 210 f_1^5 f_2 - 210 f_1^4 f_3 - 420 f_1^3 f_2^2 + 105 f_1^3 f_4 + 420 f_1^2 f_2 f_3 + 140 f_1 f_2^3 - 35 f_1^2 f_5 - 112 f_1 f_2 f_4 - 70 f_1 f_3^2 - \\ & - 70 f_2^2 f_3 + 8 f_1 f_6 + 14 f_2 f_5 + 21 f_3 f_4.
\end{align*}
At $x^6$ in \eqref{u1} and at $x^{4}$ in \eqref{u2} we get different relations:
\begin{align*}	
	3 f_8 &= 8505 f_1^8 - 36855 f_1^6 f_2 + 14805 f_1^5 f_3 + 48300 f_1^4 f_2^2 - 4599 f_1^4 f_4 - 29820 f_1^3 f_2 f_3 - 19320 f_1^2 f_2^3 + 1134 f_1^3 f_5 + \\ & 
	+ 6552 f_1^2 f_2 f_4 + 4095 f_1^2 f_3^2 + 10500 f_1 f_2^2 f_3 + 1120 f_2^4 - 222 f_1^2 f_6 - 980 f_1 f_2 f_5 - 1470 f_1 f_3 f_4 - 924 f_2^2 f_4 - \\ & - 1155 f_2 f_3^2 + 33 f_1 f_7 + 80f_2 f_6 + 140 f_3 f_5 + 84 f_4^2,\\
	19 f_8 &= 53865 f_1^8 - 233415 f_1^6 f_2 + 94500 f_1^5 f_3 + 304920 f_1^4 f_2^2 - 29862 f_1^4 f_4 - 188370 f_1^3 f_2 f_3 - 121380 f_1^2 f_2^3 + \\ &
	+ 7497 f_1^3 f_5 + 41706 f_1^2 f_2 f_4 + 25515 f_1^2 f_3^2 + 65730 f_1 f_2^2 f_3 + 7000 f_2^4 - 1476 f_1^2 f_6 - 6300 f_1 f_2 f_5 - \\& 
	- 9072 f_1 f_3 f_4 - 5796 f_2^2 f_4 - 7140 f_2 f_3^2 + 216 f_1 f_7 + 516 f_2 f_6 + 861 f_3 f_5 + 504 f_4^2.
\end{align*}
Comparing this expressions for $f_8$ taking into account the expressions for $f_4, f_5, f_6, f_7$ and $C$ we obtain the relation
\[
	C K^2 = 0.
\]
Therefore we obtain either $C = 0$, which gives solution \eqref{fp}, or $K = 0$, which gives solution \eqref{ft}.

From \eqref{u1} and initial conditions it follows that for given $f_1$, $f_2$, $f_3$ all $f_k$ for $k \geqslant 4$ are uniquely defined, thus there are no other solutions.
\end{proof}


\begin{thebibliography}{88}

\bibitem{BN} V.~M.~Buchstaber, E.~Yu.~Netay,
\emph{$\mathbb{C}P(2)$-multiplicative Hirzebruch genera and elliptic cohomology}, Russian Math. Surveys, 69:4 (2014), 757--759.

\bibitem{BPR}
V.~M.~Buchstaber, T. Panov, N. Ray, \emph{Toric genera}, Int. Math. Res. Notices IMRN, {\bf 16} (2010), 3207--3262.

\bibitem{H}
F.~Hirzebruch, T.~Berger, R.~Jung, \emph{Manifolds and Modular Forms}, {Braunschweig: Vieweg},  (1992).

\bibitem{Buc10}
 V. M. Buchstaber, \emph{The general Krichever genus}, Russian Math. Surveys, 65:5 (2010), 979--981.

\bibitem{BucNet11}
V. M. Buchstaber, E. Yu. Bunkova, \emph{Krichever Formal Groups}, Funct. Anal. Appl., 45:2 (2011), 99--116.

\end{thebibliography}
\end{document}